\newtheorem{thm}{Theorem}[section]    
\newtheorem{pro}[thm]{Proposition}
\newtheorem{cor}[thm]{Corollary}
\newtheorem{lem}[thm]{Lemma}
\theoremstyle{definition}           
\newtheorem*{rem}{Remark}
\numberwithin{equation}{section}
\newcommand{\eps}{\varepsilon}
\newcommand{\C}{c_{d}} 
\newcommand{\N}{\mathbb{N}} 
\newcommand{\R}{\mathbb{R}}
\newcommand{\dist}{\operatorname{dist}}
\newcommand{\essinf}{\operatornamewithlimits{ess\, inf}}
\def\vint_#1{\mathchoice
          {\mathop{\vrule width 6pt height 3 pt depth -2.5pt
                  \kern -8pt \intop}\nolimits_{\hspace{-1ex}#1}}%
          {\mathop{\vrule width 5pt height 3 pt depth -2.6pt
                  \kern -6pt \intop}\nolimits_{\hspace{-1ex}#1}}%
          {\mathop{\vrule width 5pt height 3 pt depth -2.6pt
                  \kern -6pt \intop}\nolimits_{\hspace{-1ex}#1}}%
          {\mathop{\vrule width 5pt height 3 pt depth -2.6pt
                  \kern -6pt \intop}\nolimits_{\hspace{-1ex}#1}}}
\begin{document}

\title{The Gehring lemma in metric spaces}
\author{Outi Elina Maasalo}
\date{21th March 2006\footnote{Revised 15th Jan 2008}}

\maketitle 

\begin{abstract}
We present a proof for the Gehring lemma in a metric measure space endowed with a doubling measure. As an application we show the self--improving property of Muckenhoupt weights.
\end{abstract}

\selectlanguage{english}
\section{Introduction}

The following self--improving property of the reverse H\"older inequality is a result due to Gehring \cite{ge}. Assume that $f$ is a non--negative locally integrable function and $1<p<\infty$. If there is a constant $c$ such that the inequality
\begin{equation}
\label{eka}
\left(\vint_{B}f^pdx \right)^{1/p} \leq c \vint_{B}fdx
\end{equation}
holds for all balls $B$ of $\mathbb{R}^n$, then there exists $\eps>0$ such that
\begin{equation}
\label{toka}
\left(\vint_{B}f^{p+\eps}dx \right)^{1/{p+\eps}} \leq c \vint_{B}fdx
\end{equation}
for some other constant $c$. The theorem remains true also in metric spaces. However, the proof seems to be slightly difficult to find in the literature. 

The subject has been studied for example by Fiorenza \cite{fi} as well as D'Apuzzo and Sbordone \cite{dap-sb}, \cite{sb}. Gianazza \cite{gia} shows that if a function satisfies \eqref{eka}, then there exists $\eps >0$ such that
\begin{equation}
\label{kolmas}
\left(\vint_{X}f^{p+\eps}d\mu \right)^{1/{p+\eps}} \leq c \vint_{X}fd\mu
\end{equation}
for some constant $c$. The result is obtained in a space of homogeneous type, with the assumption that $0<\mu(X)<\infty$. In this paper, our purpose is to present a transparent proof for a version of the Gehring lemma in a metric space that has the annular decay property and that supports a doubling measure.  

Also Kinnunen examines various minimal, maximal and reverse H\"older inequalities in \cite{juha} and \cite{juha-vaitos}. Str\"omberg and Torchinsky prove Gehring's result under the additional assumption that the measure of a ball depends continuously on its radius, see \cite{st-to}. Zatorska--Goldstein \cite{anna} proves a version of the lemma, where on the right--hand side there is a ball with a bigger radius. 

We present a proof of the Gehring lemma in a doubling metric measure space. In addition, we assume that the space has the annular decay property, see Section 2. This is true for example in length spaces, i.e. metric spaces in which the distance between any pair of points is equal to the infimum of the length of rectifiable paths joining them. Our method is classical and intends to be as transparent as possible. In particular, we obtain the result for balls in the sense of \eqref{toka} in the metric setting instead of \eqref{kolmas}. The proof is based on a Calder\'on--Zygmund type argument.

The Gehring lemma has a number of possible applications. As an example we show that the Muckenhoupt class is an open ended condition. The proof is classical and holds without the assumption of annular decay property. Moreover, the Gehring lemma can be applied for example to prove higher integrability of the volume derivative, also known as the Jacobian, of a quasisymmetric mapping, see \cite{he-ko}.

\section{General Assumptions}
\label{preli}
Let $(X,d,\mu)$ be a metric measure space equipped with a Borel regular measure $\mu$ such that the measure of every nonempty open set is positive and that the measure of every bounded set is finite.

Our notation is standard. We assume that a ball $B$ in $X$ comes always with a fixed centre and radius, i.e. $B=B(x,r) =\{ y\in X \colon d(x,y) <r\}$ with $0<r<\infty$. We denote
$$
u_B=\vint_Bud\mu=\frac{1}{\mu(B)}\int_Bud\mu,
$$
and when there is no possibility for confusion we denote $kB$ the ball $B(x,kr)$. 
 We assume in addition that $\mu$ is \emph{doubling} i.e. there exists a constant $\C$ such that
$$
\mu(B(x,2r))\leq \C\mu(B(x,r))
$$
for all balls $B$ in $X$. We refer to this property by calling $(X,d,\mu)$ a doubling metric measure space and denote it briefly $X$.  This is different from the concept of \emph{doubling space}. The latter is a property of the metric space $(X,d)$, where all balls can be covered by a constant number of balls with radius half of the radius of the original ball. A doubling metric measure space is always doubling as a metric space. 

A good reference for the basic properties of a doubling metric measure space is \cite{he}. In particular, we will need two elementary facts. Consider a ball containing disjoint balls such that their radii are bounded below. In a doubling space the number of these balls is bounded. Secondly, the doubling property of $\mu$ implies that for all pairs of radii $0<r\leq R$ the inequality
$$
\frac{\mu(B(x,R))}{\mu(B(x,r))}\leq \C\left(\frac{R}{r}\right)^Q
$$
holds true for all $x\in X$. Here $Q=\log_2\C$ is called the \emph{doubling dimension} of $(X,d,\mu)$.

Given $0<\alpha\leq 1$ and a metric space $(X,d,\mu)$ with a doubling $\mu$, we say that the space satisfies the $\alpha$\textit{--annular decay property} if there exists a constant $c\geq 1$, such that
\begin{equation*}
\mu(B(x,r)\setminus B(x,(1-\delta)r))\leq c\delta^{\alpha}\mu(B(x,r))
\end{equation*}
for all $x\in X$, $r>0$ and $0<\delta <1$. We omit $\alpha$ in the notation if we do not care about its value. See \cite{bu} for further information on spaces that satisfy the annular decay property.
 
Throughout the paper, constants are denoted $c$ and they may not be the same everywhere. However, if not otherwise mentioned, they depend only on fixed constants such as those associated with the structure of the space, the doubling constant etc.

\section{Gehring lemma}
\label{gehring-section}

The following theorem is our main result.

\begin{thm}[Gehring lemma]
\label{gehring-theorem}
Consider $(X,d,\mu)$, where $\mu$ is doubling.  Let $1<p<\infty$ and $f\in L^1_{loc}(X)$ be non--negative. If there exists a constant $c$ such that $f$ satisfies the reverse H\"older inequality
\begin{equation}
\label{rhi}
\left(\vint_{B}f^pd\mu \right)^{1/p} \leq c \vint_{B}fd\mu
\end{equation}
for all balls $B$ of $X$, then there exists $q>p$ such that
\begin{equation}
\label{improved-rhi}
\left(\vint_{B}f^qd\mu \right)^{1/q} \leq c_q \vint_{2B}fd\mu
\end{equation}
for all balls $B$ of $X$. The constant $c_q$ as well as $q$ depend only on the doubling constant, $p$, and on the constant in \eqref{rhi}.
\end{thm}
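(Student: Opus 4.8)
The plan is to derive a good--$\lambda$ inequality for the level sets of $f$ by a Calder\'on--Zygmund argument at the level of metric balls, and then integrate it. Fix the ball $B=B(x_0,\rho)$ for which we want \eqref{improved-rhi}, write $\mathcal B_t=B(x_0,t)$, put $q=p+\eps$ with $\eps>0$ to be chosen small only at the end, and recall that \eqref{rhi} already forces $f\in L^p_{loc}(X)$, so $\int_{2B}f^p\,d\mu<\infty$. The central estimate I would establish is this: there is a constant $c$ and, for each pair $\rho\le t<s\le 2\rho$, a threshold $\lambda_0(s,t)$ which is a fixed multiple of $(\rho/(s-t))^Q\vint_{2B}f\,d\mu$, such that
\begin{equation}
\label{goodlambda-plan}
\int_{\mathcal B_t\cap\{f>\lambda\}}f^p\,d\mu\ \le\ c\,\lambda^{p-1}\int_{\mathcal B_s\cap\{f>\lambda/2\}}f\,d\mu\qquad\text{for all }\lambda\ge\lambda_0(s,t).
\end{equation}

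To obtain \eqref{goodlambda-plan} I would run a Calder\'on--Zygmund decomposition at height $\lambda$ adapted to balls. For $\mu$--a.e.\ Lebesgue point $x\in\mathcal B_t$ with $f(x)>\lambda$, averages of $f$ over small balls centred at $x$ exceed $\lambda$, while the average over $B(x,\tfrac15(s-t))$, which is contained in $\mathcal B_s\subseteq 2B$, is at most a multiple of $(\rho/(s-t))^Q\vint_{2B}f\,d\mu$ by the doubling--dimension inequality; the choice of $\lambda_0(s,t)$ makes this last average $\le\lambda$, so a Vitali--type stopping--time argument produces countably many pairwise disjoint balls $B_i$, with $5B_i\subseteq\mathcal B_s$, satisfying $\vint_{B_i}f\,d\mu>\lambda$, $\vint_{5B_i}f\,d\mu\le c\lambda$, and $\mathcal B_t\cap\{f>\lambda\}\subseteq\bigcup_iB_i$ up to a null set. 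The failure of $r\mapsto\vint_{B(x,r)}f\,d\mu$ to be monotone or continuous is handled, as usual, by working with suprema of admissible radii. Now the reverse H\"older inequality \eqref{rhi} applied on $5B_i$ gives $\int_{B_i}f^p\,d\mu\le c\,\mu(5B_i)\big(\vint_{5B_i}f\,d\mu\big)^p\le c\,\lambda^p\mu(B_i)$, so that $\int_{\mathcal B_t\cap\{f>\lambda\}}f^p\,d\mu\le\sum_i\int_{B_i}f^p\,d\mu\le c\,\lambda^{p-1}\bigl(\lambda\sum_i\mu(B_i)\bigr)$; estimating $\lambda\sum_i\mu(B_i)\le\lambda\,\mu\bigl(\{x\in\mathcal B_s:M_sf(x)>\lambda\}\bigr)\le c\int_{\mathcal B_s\cap\{f>\lambda/2\}}f\,d\mu$ --- where $M_s$ is the maximal operator over balls contained in $\mathcal B_s$ and the last step is the weak--type $(1,1)$ inequality, valid in any doubling space, applied to $f\mathbf{1}_{\{f>\lambda/2\}}$ --- yields \eqref{goodlambda-plan}.

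The remainder is a Cavalieri computation. With $\min(N,f)^\eps=\eps\int_0^N\lambda^{\eps-1}\mathbf{1}_{\{f>\lambda\}}\,d\lambda$ and $Z_N(t):=\int_{\mathcal B_t}\min(N,f)^\eps f^p\,d\mu<\infty$, splitting the $\lambda$--integral at $\lambda_0(s,t)$ gives a low part $\le\lambda_0(s,t)^\eps\int_{\mathcal B_t}f^p\,d\mu\le c\,(\rho/(s-t))^{Q\eps}\,\mu(2B)\big(\vint_{2B}f\,d\mu\big)^{p+\eps}$ (using \eqref{rhi} on $2B$ and $\mathcal B_t\subseteq 2B$), and a high part which, after inserting \eqref{goodlambda-plan} and using Fubini, is $\le\theta(\eps)\,Z_N(s)$ with $\theta(\eps)\to0$ as $\eps\to0^+$. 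Hence, for all $\rho\le t<s\le 2\rho$,
\begin{equation*}
Z_N(t)\ \le\ \theta(\eps)\,Z_N(s)+\frac{c\,\rho^{Q\eps}\,\mu(2B)\left(\vint_{2B}f\,d\mu\right)^{p+\eps}}{(s-t)^{Q\eps}}.
\end{equation*}
Choosing $\eps$ so small that $\theta(\eps)<1$ and applying the standard iteration lemma for bounded non--negative functions on $[\rho,2\rho]$ eliminates the $\theta(\eps)Z_N(s)$ term and gives $Z_N(\rho)\le c\,\mu(2B)\big(\vint_{2B}f\,d\mu\big)^{p+\eps}$ with $c$ independent of $N$; letting $N\to\infty$ by monotone convergence and using $\mu(2B)\le\C\mu(B)$ yields $\vint_Bf^{p+\eps}\,d\mu\le c_q\big(\vint_{2B}f\,d\mu\big)^{p+\eps}$, which is \eqref{improved-rhi} with $q=p+\eps$; tracking the constants shows $q$ and $c_q$ depend only on $\C$, $p$ and the constant in \eqref{rhi}.

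I expect the Calder\'on--Zygmund step to be the main obstacle: without dyadic cubes the dyadic stopping time has to be replaced by a Vitali--covering stopping time on metric balls, and three things must work simultaneously --- the stopping balls and a fixed dilate of them must remain inside $\mathcal B_s$, which is what forces the threshold $\lambda_0$ to depend on $s-t$ and is precisely what makes the iteration lemma applicable; the reverse H\"older estimates on the individual $B_i$ must be summable even though the dilated balls $5B_i$ overlap, which here is arranged by passing through the weak--$(1,1)$ maximal inequality rather than any bounded--overlap property; and the construction must not depend on monotonicity or continuity of averages in the radius. A secondary point, already present in the Euclidean proof, is that the self--improvement estimate naturally carries the larger ball $2B$ on its right--hand side, so the $\theta(\eps)$--term cannot be absorbed by iterating the inequality over dilates of a single ball --- that produces only a strictly weaker bound --- and must instead be absorbed through the concentric--balls iteration above, which never leaves $2B$.
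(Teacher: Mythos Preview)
Your argument is essentially correct and yields the theorem, but it follows a genuinely different route from the paper's. The paper works with level sets of the \emph{restricted maximal function} $Mf$ over balls contained in $100B_0$: it first proves a weak--type estimate $\int_{B_0\cap\{Mf>\lambda\}}f^p\,d\mu\le c\lambda^p\mu(100B_0\cap\{Mf>\lambda\})$, integrates this via Cavalieri and the $L^q$--boundedness of $M$ (applied to truncations $f_i=\min\{f,i\}$) to reach $\vint_{B_0}f^q\le c\eps\vint_{100B_0}f^q+c(\vint_{100B_0}f)^q$, and finally absorbs the $\eps$--term by a separate covering lemma (their Lemma~\ref{l:g-2iterat}) that passes from $100B_0$ back to $2B_0$. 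You instead stay inside $2B$ from the outset and work with level sets of $f$ itself: your good--$\lambda$ inequality already compares concentric balls $\mathcal B_t\subset\mathcal B_s\subset 2B$, so a single application of the Giaquinta iteration lemma on $[\rho,2\rho]$ suffices, and your truncation $\min(N,f)^\eps$ replaces both the paper's truncation $f_i$ and its appeal to the maximal theorem. Your approach is closer to the classical Euclidean Gehring--Giaquinta proof and is arguably more economical; the paper's approach separates the analytic and the covering--geometric steps more cleanly, at the cost of an extra lemma.

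One small technical slip: the Vitali argument you sketch does not give $\mathcal B_t\cap\{f>\lambda\}\subset\bigcup_iB_i$ but only $\mathcal B_t\cap\{f>\lambda\}\subset\bigcup_i5B_i$ up to a null set. This is harmless: since you already apply \eqref{rhi} on $5B_i$ and have $\vint_{5B_i}f\le c\lambda$, one gets $\sum_i\int_{5B_i}f^p\le c\lambda^p\sum_i\mu(5B_i)\le c\lambda^p\sum_i\mu(B_i)$ by doubling, and the rest of your chain is unchanged. Just make sure the initial radii are chosen small enough (e.g.\ $\tfrac{1}{25}(s-t)$) so that $5B_i\subset\mathcal B_s$ still holds.
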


In metric spaces with a priori more geometrical structure this can be improved.

\begin{cor}
\label{gehring-corollary}
In addition to the assumptions in Theorem \ref{gehring-theorem}, suppose $X$ satisfies the annular decay property. Then the measure induced by $f$ is doubling and \eqref{rhi} implies
\begin{equation}
\label{better-improved}
\left(\vint_{B}f^qd\mu \right)^{1/q} \leq c_q \vint_{B}fd\mu
\end{equation}
for all balls $B$ of $X$. The constant $c_q$ as well as $q$ depend only on the doubling constant, $p$, and on the constant in \eqref{rhi}.
\end{cor}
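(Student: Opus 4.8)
The plan is to deduce Corollary \ref{gehring-corollary} directly from Theorem \ref{gehring-theorem}: the theorem already gives \eqref{improved-rhi} with an average over $2B$ on the right, and \eqref{better-improved} differs only in having $B$ there instead. The bridge between the two is exactly the statement that the measure $\nu$ defined by $d\nu=f\,d\mu$ is doubling, since $\vint_{2B}f\,d\mu=\nu(2B)/\mu(2B)$ and $\vint_{B}f\,d\mu=\nu(B)/\mu(B)$. So the real content is to prove $\nu(2B)\le c\,\nu(B)$, and this is where the annular decay property must be used — the doubling of $\mu$ alone cannot control the mass of $f$ on the ``shell'' $2B\setminus B$.

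To prove $\nu$ is doubling I would fatten $B=B(x,r)$ in small steps. Fix $0<\delta<1$. By Hölder's inequality with exponents $p$ and $p'=p/(p-1)$,
$$\int_{(1+\delta)B\setminus B}f\,d\mu\le\Big(\int_{(1+\delta)B}f^p\,d\mu\Big)^{1/p}\,\mu\big((1+\delta)B\setminus B\big)^{1/p'}.$$
The reverse Hölder inequality \eqref{rhi} on the ball $(1+\delta)B$ bounds the first factor by $c\,\mu((1+\delta)B)^{1/p-1}\int_{(1+\delta)B}f\,d\mu$. For the second factor, note that $B(x,r)=B\big(x,(1-\tfrac{\delta}{1+\delta})\,(1+\delta)r\big)$, so $(1+\delta)B\setminus B$ is a shell of the type controlled by the annular decay property, whence $\mu((1+\delta)B\setminus B)\le c\,\delta^{\alpha}\mu((1+\delta)B)$. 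Multiplying the two estimates, the powers of $\mu((1+\delta)B)$ cancel and one gets
$$\int_{(1+\delta)B}f\,d\mu\le\int_{B}f\,d\mu+c_0\,\delta^{\alpha/p'}\int_{(1+\delta)B}f\,d\mu,$$
with $c_0$ depending only on $p$, the doubling constant, the constant in \eqref{rhi}, and the annular decay data. Choosing $\delta$ once and for all so small that $c_0\delta^{\alpha/p'}\le\tfrac12$, and absorbing (all the integrals are finite, since $f\in L^1_{loc}(X)$ and balls are bounded), yields $\int_{(1+\delta)B}f\,d\mu\le 2\int_{B}f\,d\mu$ for every ball $B$.

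Since this one-step estimate holds for every ball, I would iterate it along the chain $B,(1+\delta)B,(1+\delta)^2B,\dots$; after $N=\lceil\log 2/\log(1+\delta)\rceil$ steps the ball $(1+\delta)^NB$ contains $2B$, so $\nu(2B)\le\nu((1+\delta)^NB)\le 2^N\nu(B)$, and $\delta$ (hence $N$) depends only on the structural constants. This proves that $\nu$ is doubling. Plugging this into \eqref{improved-rhi} and using $\mu(B)\le\mu(2B)$,
$$\Big(\vint_{B}f^q\,d\mu\Big)^{1/q}\le c_q\,\vint_{2B}f\,d\mu=c_q\frac{\nu(2B)}{\mu(2B)}\le 2^Nc_q\,\frac{\nu(B)}{\mu(B)}=2^Nc_q\,\vint_{B}f\,d\mu,$$
which is \eqref{better-improved} with the same exponent $q$ and a renamed constant.

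The only step I expect to require genuine care is the shell estimate: applying the annular decay property with the correct (small) parameter $\tfrac{\delta}{1+\delta}$, and then bookkeeping the constants through the finite iteration so that the final constant depends only on $p$, the doubling constant, the constant in \eqref{rhi}, and the annular decay constant and exponent. Everything else is a routine use of Hölder's inequality together with the already established Theorem \ref{gehring-theorem}.
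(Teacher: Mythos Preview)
Your proof is correct and follows essentially the same route as the paper: both establish the doubling of $d\nu=f\,d\mu$ by combining H\"older with the reverse H\"older inequality to control $\nu$ of a thin shell in terms of its $\mu$--proportion to the power $1/p'$, invoke the annular decay property to make this small, and then iterate; the only cosmetic difference is that the paper compares $B$ with $(1-\delta)B$ while you compare $(1+\delta)B$ with $B$. Once $\nu$ is doubling, both arguments conclude \eqref{better-improved} from \eqref{improved-rhi} in the same one--line way.
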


The following is a standard iteration lemma, see \cite{gi}. 

\begin{lem} 
\label{l:g-iterati}
Let $Z:[R_1,R_2]\subset \R\to [0,\infty)$ be a bounded non--negative function. Suppose that
for all $\rho,r$ such that $R_1\leq \rho<r\leq R_2$ 
\begin{equation}
Z(\rho)\leq \big(A(r-\rho)^{-\alpha}+B(r-\rho)^{-\beta}+C\big) +\theta Z(r)
\end{equation}
holds true for some constants $A,B,C\geq 0$, $\alpha>\beta>0$ and $0\leq \theta<1$. Then 
\begin{equation}
Z(R_1)\leq c(\alpha,\theta)\big(A(R_2-R_1)^{-\alpha}+B(R_2-R_1)^{-\beta}+C\big).
\end{equation}
\end{lem}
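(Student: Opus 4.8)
The plan is to run the classical iteration argument of \cite{gi} over a geometric sequence of radii increasing to $R_2$. Fix a parameter $\tau\in(0,1)$, to be selected at the very end in terms of $\alpha$ and $\theta$ only, and set $\rho_0=R_1$ and $\rho_{i+1}=\rho_i+(1-\tau)\tau^i(R_2-R_1)$ for $i\geq 0$. Summing the geometric series one gets $\rho_i=R_1+(1-\tau^i)(R_2-R_1)$, so $R_1\leq\rho_i<\rho_{i+1}<R_2$, $\rho_i\to R_2$, and $\rho_{i+1}-\rho_i=(1-\tau)\tau^i(R_2-R_1)$.

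Next I would apply the hypothesis with $\rho=\rho_i$ and $r=\rho_{i+1}$ and iterate. Writing $d=R_2-R_1$, the assumption becomes
\[
Z(\rho_i)\leq A(1-\tau)^{-\alpha}\tau^{-\alpha i}d^{-\alpha}+B(1-\tau)^{-\beta}\tau^{-\beta i}d^{-\beta}+C+\theta Z(\rho_{i+1}),
\]
and an induction on $k$ gives
\[
Z(R_1)\leq \theta^k Z(\rho_k)+\sum_{i=0}^{k-1}\theta^i\Big(A(1-\tau)^{-\alpha}\tau^{-\alpha i}d^{-\alpha}+B(1-\tau)^{-\beta}\tau^{-\beta i}d^{-\beta}+C\Big).
\]
Since $Z$ is bounded and $0\leq\theta<1$, the term $\theta^kZ(\rho_k)$ tends to $0$ as $k\to\infty$, so everything reduces to summing the three geometric series obtained by letting $k\to\infty$.

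The one point that actually requires care --- and is the whole content of the lemma --- is the choice of $\tau$: since $0\leq\theta<1$ and $\alpha>0$, we may pick $\tau\in(0,1)$ so close to $1$ that $\theta\tau^{-\alpha}<1$, and then automatically $\theta\tau^{-\beta}\leq\theta\tau^{-\alpha}<1$ because $\beta<\alpha$ and $\tau<1$. With such a $\tau$ the series $\sum_i(\theta\tau^{-\alpha})^i$, $\sum_i(\theta\tau^{-\beta})^i$ and $\sum_i\theta^i$ all converge, and passing to the limit yields
\[
Z(R_1)\leq \frac{(1-\tau)^{-\alpha}}{1-\theta\tau^{-\alpha}}\,Ad^{-\alpha}+\frac{(1-\tau)^{-\beta}}{1-\theta\tau^{-\beta}}\,Bd^{-\beta}+\frac{C}{1-\theta}.
\]
Because $\tau<1$ and $\beta<\alpha$, the first coefficient dominates the other two, so taking $c(\alpha,\theta)=(1-\tau)^{-\alpha}/(1-\theta\tau^{-\alpha})$ with $\tau=\tau(\alpha,\theta)$ finishes the proof. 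There is no genuine obstacle beyond keeping track of the geometric factors; the argument is elementary once the radii are spaced geometrically and $\tau$ is chosen to make $\theta\tau^{-\alpha}<1$.
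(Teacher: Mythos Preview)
Your argument is correct and is precisely the standard iteration from \cite{gi} that the paper cites in lieu of a proof; the paper itself gives no proof of this lemma, only the reference. The choice of geometrically spaced radii and of $\tau$ with $\theta\tau^{-\alpha}<1$ is exactly the classical one, and your verification that the first coefficient dominates the other two (so that a single constant $c(\alpha,\theta)$ suffices) is accurate.
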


Lemma \ref{l:g-iterati} is needed in the proof of our first key lemma:

\begin{lem}
\label{l:g-2iterat}
Let $R>0,$ $q>1$, $k>1$ and $f\in L^q_{loc}(X)$ non--negative. There exists $\eps>0$ such that, if for all $0<r\leq R$ and for a constant $c$
\begin{equation}
\label{e:g-suppose}
\vint_{B(x,r)}f^qd\mu \leq \eps \vint_{B(x,kr)}f^qd\mu + c\left(\vint_{B(x,kr)}fd\mu \right)^q
\end{equation}
holds, then
\begin{equation}
\label{get}
\vint_{B(x,R)}f^q d\mu \leq c\left(\vint_{B(x,2R)}fd\mu \right)^q.
\end{equation}
The constant in \eqref{get} depends on $k$, on the doubling constant and on the constant in \eqref{e:g-suppose}. 
\end{lem}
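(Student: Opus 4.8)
The plan is to fix a ball $B=B(x_0,R)$ and apply Lemma \ref{l:g-iterati} to the function $Z(r)=\vint_{B(x_0,r)}f^q\,d\mu$ — or rather to a suitably weighted version of it, since the averages carry the factor $1/\mu(B(x_0,r))$ which is not monotone in an obviously usable way. The cleanest route is to work with the unnormalised quantity $\Phi(r)=\int_{B(x_0,r)}f^q\,d\mu$ on an interval $[R_1,R_2]$ with, say, $R_1=R$ and $R_2=2R$, and to convert \eqref{e:g-suppose} into a recursive inequality of the exact shape required by the iteration lemma. Concretely, for $R\le \rho<r\le 2R$ I would cover the ball $B(x_0,\rho)$ by a bounded number $N$ of balls $B_i=B(y_i,(r-\rho)/k)$ with $y_i\in B(x_0,\rho)$, where $N$ depends only on $k$ and the doubling constant (this is the first elementary fact quoted from \cite{he}: a fixed ball contains only boundedly many disjoint balls of comparable radius, hence admits a bounded covering). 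Each dilated ball $kB_i=B(y_i,r-\rho)$ is contained in $B(x_0,r)$, so applying \eqref{e:g-suppose} on each $B_i$ and summing gives
\[
\int_{B(x_0,\rho)}f^q\,d\mu \le \sum_{i=1}^N \int_{B_i}f^q\,d\mu
\le \eps\sum_{i=1}^N \frac{\mu(B_i)}{\mu(kB_i)}\int_{kB_i}f^q\,d\mu
+ c\sum_{i=1}^N \mu(B_i)\left(\vint_{kB_i}f\,d\mu\right)^q.
\]

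For the first sum I use $\mu(B_i)/\mu(kB_i)\le 1$ and $kB_i\subset B(x_0,r)$ to bound it by $\eps N\,\Phi(r)$, so choosing $\eps$ small enough that $\theta:=\eps N<1$ — and this is why $\eps$ must depend on $k$ and the doubling constant — gives the $\theta Z(r)$ term. For the second sum I estimate $\mu(B_i)$ and $\vint_{kB_i}f\,d\mu$ from below/above using the doubling inequality $\mu(B(x,R))/\mu(B(x,r))\le \C(R/r)^Q$: since $B(x_0,R)\subset kB_i = B(y_i,r-\rho)$ when $r-\rho$ is large, but in general $r-\rho$ can be much smaller than $R$, I instead compare everything to the fixed ball $B(x_0,2R)$ and pull out a factor that is a negative power of $(r-\rho)$. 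This produces the $A(r-\rho)^{-\alpha}$ term, with $A$ proportional to $\mu(B(x_0,2R))\big(\vint_{B(x_0,2R)}f\,d\mu\big)^q$ and $\alpha$ a multiple of $Q$; here one uses that $f$ satisfies a reverse-type control of $\vint_{kB_i}f$ by $\vint_{2R}f$ via doubling of $\mu$ (the $L^1$ average over a small ball is bounded by a power of the ratio of measures times the average over the big ball). The $B$-term and $C$-term of the iteration lemma can be taken to be zero.

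Having verified the hypothesis of Lemma \ref{l:g-iterati} on $[R,2R]$, its conclusion yields
\[
\Phi(R)=\int_{B(x_0,R)}f^q\,d\mu \le c(\alpha,\theta)\,A\,R^{-\alpha}
= c\,R^{-\alpha}\,\mu(B(x_0,2R))\left(\vint_{B(x_0,2R)}f\,d\mu\right)^q.
\]
Dividing by $\mu(B(x_0,R))$ and absorbing $R^{-\alpha}\mu(B(x_0,2R))/\mu(B(x_0,R))$ into the constant — which is legitimate because, after extracting the correct power of $R$ in the definition of $A$, this ratio is bounded by a structural constant (doubling again bounds $\mu(2B)/\mu(B)$, and the powers of $R$ are arranged to cancel) — gives \eqref{get} with a constant depending only on $k$, the doubling constant, and the constant in \eqref{e:g-suppose}.

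The main obstacle, and the place requiring the most care, is the bookkeeping of the powers of $(r-\rho)$ versus powers of $R$: the covering number $N$ and the volume ratios both feed negative powers of $(r-\rho)$ into the estimate, and one must make sure the exponent $\alpha$ that emerges is genuinely positive and that the final substitution $r-\rho\rightsquigarrow R_2-R_1=R$ produces a clean, $R$-independent (hence scale-invariant) constant. A secondary technical point is the uniform choice of $\eps$: it must be fixed \emph{before} $R$ and the ball are chosen, which works because $N=N(k,\C)$ depends only on structural data, not on the ball.
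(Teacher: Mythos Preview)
Your overall strategy matches the paper's --- cover $B(x_0,\rho)$ by small balls of radius $(r-\rho)/k$, apply the hypothesis on each, sum, and feed the result into Lemma~\ref{l:g-iterati} --- but there is a genuine gap in the covering step that breaks the argument as written.

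You claim that the covering number $N$ depends only on $k$ and the doubling constant, invoking the fact that a ball contains only boundedly many disjoint balls of \emph{comparable} radius. But the radii here are \emph{not} comparable: the ball being covered has radius $\rho\ge R$, while the covering balls have radius $(r-\rho)/k$, which tends to $0$ as $r\to\rho$. In fact the number of balls needed is of order $(r/(r-\rho))^Q$, and this blows up. Consequently your choice ``$\eps$ small enough that $\theta:=\eps N<1$'' cannot be made uniformly in $\rho,r$, and the iteration lemma does not apply. Your closing remark that ``$N=N(k,\C)$ depends only on structural data'' is exactly the false step.

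The paper repairs this by separating two different counts. Using a Vitali construction (disjoint balls $B(x_i,\tilde r/5)$ with $\tilde r=(r-\rho)/k$, then dilating), one obtains a cover $\{B(x_i,\tilde r)\}$ for which the dilates $\{B(x_i,k\tilde r)\}$ have \emph{bounded overlap} $M=M(k,\C)$ --- this is what controls the $\eps$-term, giving $\eps M\int_{B(x_0,r)}f^q$ with $M$ genuinely structural. The \emph{number} of balls, on the other hand, is only bounded by $c(r/(r-\rho))^Q$; this factor is not absorbed into $\theta$ but instead goes into the $A(r-\rho)^{-\alpha}$ term (together with the measure ratio $\mu(B_i)^{1-q}\lesssim (r/(r-\rho))^{Q(q-1)}\mu(B(x_0,r))^{1-q}$), producing $\alpha=Qq$. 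Once you make this distinction --- bounded overlap for the first sum, unbounded cardinality absorbed into the power of $(r-\rho)$ for the second --- the rest of your outline goes through essentially as in the paper.
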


\begin{proof}
Fix $R>0$ and choose  $r,\rho>0$ such that $R\leq \rho < r\leq 2R$. Set $\tilde{r}=(r-\rho)/k$. Now
$$
B(x,\rho)\subset \bigcup_{y\in B(x,\rho)}B(y,\tilde{r}/5)
$$
and by the Vitali covering theorem there exist disjoint balls $\{B(x_i,\tilde{r}/5)\}_{i=1}^{\infty}$ such that $x_i\in B(x,\rho)$ and
$$
B(x,\rho)\subset \bigcup_{i}B(x_i,\tilde{r}).
$$
These balls can be chosen so that
\begin{equation}
\label{finite-intersection}
\sum_{i}\chi_{B(x_{i},k\tilde{r})} \leq M
\end{equation}
for some constant $M<\infty$. This follows from the doubling property of the space. Indeed, assume that $y$ belongs to $N$ balls $B(x_i,k\tilde{r})$. Clearly
$$
B(x_i,k\tilde{r})\subset B(y,2k\tilde{r})\subset B(y,2R). 
$$
Remember that $\tilde{r}$ and $R$ are fixed and choose $K=20R/\tilde{r}$. Now there are $N$ disjoint balls with radius $\tilde{r}/5\geq 2R/K$ included in a fixed ball $B(y, 2R)$. Since the space is doubling, we must have $N\leq M(K)$. The inequality \eqref{finite-intersection} follows.

Observe then that by the doubling property and the construction of the balls $\{B(x_i,\tilde{r})\}_i$ we have
\begin{equation*}
\begin{split}
\sum_i\mu(B(x_i,\tilde{r}))&\leq  c \sum_i\mu(B(x_i,\tilde{r}/5)) =  c\mu(\cup_i B(x_i,\tilde{r}/5)) \\
&\leq c\mu(B(x,r)) \leq c\left(\frac{r}{\rho}\right)^Q \mu(B(x,\rho)). 
\end{split}
\end{equation*}
On the other hand, $B(x,\rho)\subset B(x_i, 2k\rho)$, so that
\begin{equation*}
\begin{split}
\mu(B(x,\rho)) &\leq \mu(B(x_i, 2k\rho)) \leq c\left(\frac{2k\rho}{\tilde{r}}\right)^Q \mu(B(x_i,\tilde{r})) \\
&=c  \left(\frac{\rho}{r-\rho}\right)^Q \mu(B(x_i,\tilde{r})).
\end{split}
\end{equation*}
Combining these two inequalities implies 
\begin{equation*}
\begin{split}
\mu(B(x,\rho))&\geq c \left(\frac{r}{\rho}\right)^{-Q}\sum_i \mu(B(x_i,\tilde{r})) \\
&\geq  c\left(\frac{r}{\rho}\right)^{-Q}\left(\frac{\rho}{r-\rho}\right)^{-Q} \sum_i\mu(B(x,\rho)).
\end{split}
\end{equation*}
And as a consequence
$$
\# \{B(x_i,\tilde{r})\} \leq c \left(\frac{r}{\rho}\right)^{Q}\left(\frac{\rho}{r-\rho}\right)^{Q}, 
$$
i.e. the number of balls $B(x_i, \tilde{r})$ is at most $c\big(r/(r-\rho)\big)^Q$, where $c$ depends only on $k$, on the doubling constant and $Q=\log_2\C$.

Observe that \eqref{e:g-suppose} holds true for $\tilde{r}$, so that 
\begin{align}
\int_{B(x_{i},\tilde{r})}f^q d\mu &\leq  \eps \frac{\mu(B(x_{i},\tilde{r}))}{\mu(B(x_{i},k\tilde{r}))} \int_{B(x_{i},k\tilde{r})}f^q d\mu
 \nonumber\\
&  \quad+c\frac{\mu(B(x_{i},\tilde{r}))}{\mu(B(x_{i},k\tilde{r}))^q} \left(\int_{B(x_{i},k\tilde{r})}f d\mu\right)^q \nonumber\\
&\leq  \eps \int_{B(x_{i},k\tilde{r})}f^q d\mu + c\mu(B(x_i,\tilde{r}))^{1-q}\left(\int_{B(x_{i},k\tilde{r})}f d\mu\right)^q. 
\label{e:g-2suppos}
\end{align}
We note that
\begin{equation*}
\frac{\mu(B(x,r))}{\mu(B(x_{i},\tilde{r}))}\leq \frac{\mu(B(x_i,2r))}{\mu(B(x_{i},\tilde{r}))} 
\leq  \C\left( \frac{2r}{\tilde{r}}\right)^Q 
\leq   c \left( \frac{r}{r-\rho}\right)^Q, 
\end{equation*}
from which it follows that
$$
\mu(B(x_{i},\tilde{r}))^{1-q} \leq c \left( \frac{r}{r-\rho}\right)^{Q(q-1)}\mu(B(x,r))^{1-q}.
$$
Together with \eqref{e:g-2suppos} this implies 
\begin{multline}
\label{e:g-tosum}
\int_{B(x_{i},\tilde{r})}f^q d\mu \leq \eps \int_{B(x_{i},k\tilde{r})}f^q d\mu
\\+ c\left( \frac{r}{r-\rho}\right)^{Q(q-1)}\mu(B(x,r))^{1-q} \left(\int_{B(x_{i},k\tilde{r})}f d\mu\right)^q.
\end{multline}
Since $B(x,\rho)\subset \cup_iB(x_i,\tilde{r})$, summing over $i$ in \eqref{e:g-tosum} gives
\begin{equation*}
\begin{split}
\int_{B(x,\rho)}f^qd\mu &\leq \sum_i\int_{B(x_{i},\tilde{r})}f^qd\mu \\
&\leq  \eps\sum_i \int_{B(x_{i},k\tilde{r})}f^q d\mu \\
&  \quad + c\left( \frac{r}{r-\rho}\right)^{Q(q-1)}\mu(B(x,r))^{1-q}\sum_i \left(\int_{B(x_{i},k\tilde{r})}f d\mu\right)^q \\
&\leq  \eps M \int_{B(x,r)}f^qd\mu \\
&  \quad + c\left( \frac{r}{r-\rho}\right)^{Q(q-1)}\mu(B(x,r))^{1-q}\left(\frac{r}{r-\rho}\right)^Q \left(\int_{B(x,r)}f d\mu\right)^q \\
&= \eps M \int_{B(x,r)}f^qd\mu +  c\left( \frac{r}{r-\rho}\right)^{Qq}\mu(B(x,r))^{1-q} \left(\int_{B(x,r)}f d\mu\right)^q.
\end{split}
\end{equation*}
Finally, remember that $R\leq \rho<r\leq 2R$, so that
\begin{multline*}
\int_{B(x,\rho)}f^qd\mu \leq \eps M \int_{B(x,r)}f^qd\mu \\
+  cR^{Qq}(r-\rho)^{-Qq}\mu(B(x,r))^{1-q} \left(\int_{B(x,r)}f d\mu\right)^q
\end{multline*}
and furthermore
\begin{multline}
\label{e:g-finally}
\vint_{B(x,\rho)}f^qd\mu \leq \eps c \vint_{B(x,r)}f^qd\mu \\
+ cR^{Qq}(r-\rho)^{-Qq}\left(\vint_{B(x,2R)}f d\mu\right)^q.
\end{multline}
We are able to iterate this. In Lemma \ref{l:g-iterati} set 
$$
Z(\rho):=\vint_{B(x,\rho)}f^qd\mu,
$$
so that $Z$ is bounded on $[R,2R]$. Set also $R_1=R$, $R_2=2R$, $\alpha =Qq$ and  
$$
A=cR^{Qq}\left(\vint_{B(x,2R)}fd\mu\right)^q\geq 0,
$$
where $c$ is the constant in \eqref{e:g-finally}. Putting $\theta=c\eps$ and choosing $\eps$ so small that  $c\eps<1$, \eqref{e:g-finally} satisfies the assumptions of Lemma \ref{l:g-iterati} with $B=C=0$. This yields $Z(R)\leq cA(2R-R)^{-Qq}$, that is 
\begin{equation*}
\begin{split}
\vint_{B(x,R)} |f|^qd\mu&\leq  cR^{Qq}(cR-R)^{-Qq}\left(\vint_{B(x,2R)}fd\mu\right)^q \\
&= c\left(\vint_{B(x,2R)}fd\mu\right)^q.
\end{split}
\end{equation*}
\end{proof}

In the following, we consider the Hardy--Littlewood maximal function restricted to a fixed ball $100B_0$, that is
$$
Mf(x)= \sup_{\substack{B\ni x \\ B\subset 100B_0}}\vint_{B}|f|d\mu.
$$
Clearly the coefficient 100 can be replaced by any other sufficiently big constant. The role of this constant is setting a playground large enough to assure that all balls we are dealing with stay inside this fixed ball. 

\begin{lem}
\label{weaktype-lemma}
Let $f$ be a non--negative function in $L^1_{loc}(X)$ and suppose that is satisfies the reverse H\"older inequality \eqref{rhi}. Then for all balls $B_0$ in $X$ 
\begin{equation}
\label{weaktype-equation}
\int_{\{x\in B_0\, : \, Mf(x)>\lambda \}} f^pd\mu\leq c \lambda^p \mu(\{x\in 100B_0 \, :\, Mf(x)>\lambda\}),
\end{equation}
for all $\lambda > \essinf_{B_0}Mf$ with some constant depending only on $p$, the doubling constant and on the constant in \eqref{rhi}.
\end{lem}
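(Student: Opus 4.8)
The plan is to run the familiar Calder\'on--Zygmund stopping--time argument on the super-level sets of $Mf$; the only genuinely metric complication is that $r\mapsto\vint_{B(x,r)}f\,d\mu$ need not be continuous, so the ``stopping radius'' must be found along a geometric sequence of radii rather than as a supremum. Fix $B_0=B(y_0,r_0)$ and $\lambda>\essinf_{B_0}Mf$. By the definition of the essential infimum the set $\{x\in B_0:Mf(x)<\lambda\}$ has positive measure, so we may fix $z\in B_0$ with $Mf(z)<\lambda$. The hypothesis on $\lambda$ is used only through the elementary observation that if $z\in B\subset 100B_0$, then $\vint_B f\,d\mu\le Mf(z)<\lambda$; contrapositively, a ball $B\subset 100B_0$ with $\vint_B f\,d\mu>\lambda$ cannot contain $z$, so, since $z\in B_0$, such a ball (if it also meets $B_0$) has radius comparable to $r_0$ --- otherwise, being inside $100B_0$, it would swallow $B_0\ni z$. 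This a priori radius bound is what keeps all the balls produced below, and fixed dilates of them, inside the playground; it is also why $100$ only has to be a sufficiently large absolute constant, as noted after the definition of $Mf$.

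Now construct the good balls. For each $x\in B_0$ with $Mf(x)>\lambda$, pick $B^0_x\ni x$ with $B^0_x\subset 100B_0$ and $\vint_{B^0_x}f\,d\mu>\lambda$, and enlarge $B^0_x$ by keeping its centre and multiplying the radius successively by $5$. After boundedly many steps an enlargement contains $B_0$ --- and, by the radius bound above, it does so while still inside $100B_0$ --- whence its average has fallen to $\le\lambda$ by the observation. Let $\widehat B_x$ be the first enlargement with $\vint_{\widehat B_x}f\,d\mu\le\lambda$ and let $B_x$ be the ball immediately preceding it, so that $\widehat B_x=5B_x$ and
\[
x\in B_x,\qquad 5B_x=\widehat B_x\subset 100B_0,\qquad \vint_{B_x}f\,d\mu>\lambda,\qquad \vint_{5B_x}f\,d\mu\le\lambda .
\]
Since every $y\in B_x$ satisfies $Mf(y)\ge\vint_{B_x}f\,d\mu>\lambda$ and $B_x\subset 100B_0$, we also get $B_x\subset\{x\in 100B_0:Mf(x)>\lambda\}$.

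The family $\{B_x\}$ covers $\{x\in B_0:Mf(x)>\lambda\}$ and has uniformly bounded radii, so the basic covering theorem (the $5r$-covering lemma), valid in the doubling space $X$, gives a countable pairwise disjoint subfamily $\{B_i\}$ with $\{x\in B_0:Mf(x)>\lambda\}\subset\bigcup_i 5B_i$. Combining $f\ge 0$, the reverse H\"older inequality \eqref{rhi} on the ball $5B_i$, the bound $\vint_{5B_i}f\,d\mu\le\lambda$, the doubling inequality $\mu(5B_i)\le c\,\mu(B_i)$, and finally disjointness together with $B_i\subset\{x\in 100B_0:Mf(x)>\lambda\}$, we obtain
\begin{align*}
\int_{\{x\in B_0:Mf(x)>\lambda\}}f^p\,d\mu
&\le\sum_i\int_{5B_i}f^p\,d\mu
\le\sum_i\mu(5B_i)\Bigl(c\vint_{5B_i}f\,d\mu\Bigr)^p
\le c\,\lambda^p\sum_i\mu(5B_i)\\
&\le c\,\lambda^p\sum_i\mu(B_i)
=c\,\lambda^p\,\mu\Bigl(\bigcup_i B_i\Bigr)
\le c\,\lambda^p\,\mu\bigl(\{x\in 100B_0:Mf(x)>\lambda\}\bigr),
\end{align*}
which is \eqref{weaktype-equation}.

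The main obstacle is the stopping--time construction: from the single super-level ball $B^0_x$ one has to manufacture a ball $B_x$ with a \emph{two-sided} control of the average of $f$ --- strictly above $\lambda$ on $B_x$ and at most $\lambda$ on $5B_x$ --- while guaranteeing that neither $B_x$ nor $5B_x$ escapes the playground $100B_0$. Both points rest entirely on the hypothesis $\lambda>\essinf_{B_0}Mf$, through the point $z$ and the resulting radius bound for super-level balls; everything else is the routine Calder\'on--Zygmund bookkeeping and the doubling property.
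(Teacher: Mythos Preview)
Your overall strategy---cover $B_0\cap\{Mf>\lambda\}$ by balls $B_i$ with $B_i\subset 100B_0\cap\{Mf>\lambda\}$ and $\vint_{5B_i}f\,d\mu\le\lambda$, then apply \eqref{rhi}, doubling, and disjointness---is exactly the paper's. The gap is in the stopping--time construction of those balls. Your ``a priori radius bound'' is false: a ball $B\subset 100B_0$ with $\vint_B f>\lambda$ that meets $B_0$ need \emph{not} have radius $\lesssim r_0$. In $\mathbb{R}^n$ with $B_0=B(0,1)$, $x=(\tfrac12,0,\dots,0)$, $z=(-\tfrac12,0,\dots,0)$, the ball $B^0_x=B((50,0,\dots,0),49.6)$ satisfies $x\in B^0_x\subset 100B_0$ and $z\notin B^0_x$, yet its radius is $\approx 50r_0$. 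One dilation by $5$ already produces a ball of radius $\approx 250r_0$ that escapes $100B_0$; since $Mf$ is the \emph{restricted} maximal function, you then cannot infer $\vint_{5B_x}f\le Mf(z)<\lambda$. No intermediate dilate both contains $z$ and stays inside the playground, so the two--sided control $\vint_{B_x}f>\lambda$, $\vint_{5B_x}f\le\lambda$ is unjustified. The point is that a witness ball for $Mf(x)>\lambda$ may be centred far from $x$, near its own boundary through $x$, with very large radius.

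The paper sidesteps this by \emph{centering the covering balls at the points $x$ themselves}: it sets $r_x=\dist\bigl(x,\,100B_0\cap\{Mf\le\lambda\}\bigr)$ and $B_x=B(x,r_x)$. Because $\lambda>\essinf_{B_0}Mf$ provides a point of $B_0\cap\{Mf\le\lambda\}$ at distance $<2r_0$ from $x$, one gets $r_x<2r_0$, hence $5B_x\subset 11B_0\subset 100B_0$ automatically. By the very definition of $r_x$, the dilate $5B_x$ meets $\{Mf\le\lambda\}$, giving $\vint_{5B_x}f\,d\mu\le\lambda$; and $B_x\subset 100B_0\cap\{Mf>\lambda\}$ since every $y\in B_x$ lies in $100B_0$ but not in $100B_0\cap\{Mf\le\lambda\}$. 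After this Whitney--type choice of balls, the remainder of your argument (Vitali, \eqref{rhi}, doubling, disjointness) goes through verbatim.
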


\begin{proof}
Let us fix a ball $B_0$ with radius $r_0>0$. 
We denote $\{x\in X\, :\, Mf(x)>\lambda\}$ briefly by  $\{Mf>\lambda\}$. Let $\lambda>\essinf_{B_0}Mf$. Now there exists $x\in B_0$ so that $Mf(x)\leq \lambda$. This implies that $B_0\cap\{Mf\leq \lambda\} \neq \emptyset$. For every $x\in B_0\cap\{Mf>\lambda\}$, set 
$$
r_x=\dist(x,100B_0\setminus \{Mf>\lambda\}),
$$ 
so that $B(x,r_x)\subset 100B_0$. Remark that the radii $r_x$ are uniformly bounded by $2r_0$. 

In the consequence of the Vitali covering theorem there are disjoint balls \\ $\{B(x_i, r_{x_{i}})\}_{i=1}^{\infty}$ such that 
$$
B_0\cap\{Mf>\lambda\} \subset \bigcup_{i}5B_i,
$$
where we denote $B_i=B(x_i,r_i)$. Both $B_i\subset 100B_0$ and $5B_i\subset 100B_0$ for all $i=1,2,\ldots$, so they are still balls of $(X,d)$. Furthermore, $5B_i\cap \{Mf\leq \lambda\}\ne \emptyset$ for all $i=1,2,\ldots$ so that
\begin{equation}
\label{e:g-meanbou}
\vint_{5B_i}fd\mu\leq \lambda
\end{equation}
for all $i=1,2,\ldots$. We can now estimate the integral on the left side in \eqref{weaktype-equation}. A standard estimation shows that
\begin{equation*}
\begin{split}
\int_{B_0\cap\{Mf>\lambda\}}f^pd\mu& \leq\int_{\cup_i5B_{i}}f^pd\mu
\leq  \sum_i\int_{5B_i}f^pd\mu \\
& = \sum_i\mu(5B_i)\vint_{5B_i}f^pd\mu \leq  c^p\sum_i\mu(5B_i)\left(\vint_{5B_i}fd\mu\right)^p \\ 
&\leq c^p\lambda^p\sum_i\mu(5B_i), 
\end{split}
\end{equation*}
where the second last inequality follows from the reverse H\"older inequality and the last from \eqref{e:g-meanbou}. Since $\mu$ is doubling and the balls $B_i$ are disjoint we get
$$
\sum_i\mu(5B_i)\leq c\sum_i\mu(B_i) = c\mu(\cup_i B_i).
$$ 
By the definition of  $B_i \subset 100B_0\cap \{Mf>\lambda\}$ for all $i=1,2,\ldots$. Therefore
\begin{equation*}
\int_{B_0\cap\{Mf>\lambda\}}f^pd\mu \leq c\lambda^p\mu(\cup_i B_i)
\leq c\lambda^p\mu(100B_0\cap\{Mf>\lambda \})
\end{equation*}
for all $\lambda>\essinf_{B_{0}}Mf$.
\end{proof}

\begin{rem}
Note that $\essinf_{B_0}Mf \neq \infty$.
\end{rem}
Indeed, in the well known weak type estimate for locally integrable functions
$$
\mu(B_0\cap \{Mf>\lambda\}) \leq \frac{c}{\lambda} \int_{100B_0}f d\mu,
$$
the right--hand side tends to zero when $\lambda \to \infty$. The constant $c$ depends only on the doubling constant $\C$. We can thus choose $0<\lambda_0<\infty$ so that
$$
\frac{c}{\lambda_0}\int_{100B_0}fd\mu \leq \frac{1}{2}\mu(B_0).
$$ 
As a consequence,
\begin{equation*}
\begin{split}
\mu(B_0\cap\{&Mf\leq \lambda_0 \})= \mu(B_0) -\mu(B_0\cap\{Mf> \lambda_0 \}) \\
&\geq  \mu(B_0)-\frac{c}{\lambda_0}\int_{100B_0}fd\mu 
 \geq  \frac{1}{2}\mu(B_0).
\end{split}
\end{equation*}
This leads to $\essinf_{B_{0}}Mf\leq \lambda_0$, for if $\essinf_{B_{0}}Mf> \lambda_0$, then $Mf(x)>\lambda_0$ for almost every $x\in B_0$. This impossible since 
\[
\mu(B_0\cap\{Mf\leq \lambda_0 \})\geq \frac{1}{2}\mu(B_0).
\]

\medskip

For the reader's convenience, we present here one technical part of our proof as a separate lemma.
\begin{lem}
\label{technical-lemma}
Let $1<q<\infty$ and let $f\in L_{loc}^q(X)$ be non--negative. Suppose in addition that $f$ satisfies the reverse H\"older inequality. Then for every ball $B_0$ in $X$ and for all $1<p<q$
\begin{equation}
\label{technical-estimate}
 \int_{B_0\cap \{Mf>\alpha\}}f^qd\mu\leq  c\alpha^q\mu(100B_0\cap \{Mf>\alpha\}) + c\frac{q-p}{q}\int_{100B_0}(Mf)^qd\mu,
\end{equation}
where $\alpha=\essinf_{B_0}Mf$ and $c$ depends on $p$, the doubling constant and on the constant in \eqref{rhi}. 
\end{lem}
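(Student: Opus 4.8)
The plan is to integrate the weak type estimate of Lemma~\ref{weaktype-lemma} against the level parameter $\lambda$, which is the classical Gehring-type computation. Write $E_\lambda=B_0\cap\{Mf>\lambda\}$ and $F_\lambda=100B_0\cap\{Mf>\lambda\}$, so that Lemma~\ref{weaktype-lemma} reads $\int_{E_\lambda}f^p\,d\mu\le c\lambda^p\mu(F_\lambda)$ for every $\lambda>\alpha$. Since $E_\lambda\uparrow E_\alpha$ and $F_\lambda\uparrow F_\alpha$ as $\lambda\downarrow\alpha$, monotone convergence extends this to the endpoint, giving $\int_{E_\alpha}f^p\,d\mu\le c\alpha^p\mu(F_\alpha)$ as well.

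First I would invoke the Lebesgue differentiation theorem, valid in a doubling metric measure space, to get $f\le Mf$ $\mu$-almost everywhere on $B_0$ --- note that every point of the open ball $B_0$ has all sufficiently small balls around it contained in $100B_0$, so these balls are admissible in the supremum defining $Mf$. As $q-p>0$ this yields $f^{q-p}\le (Mf)^{q-p}$ $\mu$-a.e.\ on $B_0$. Then, for any $x$ with $Mf(x)\ge\alpha$, the elementary identity
\[
(Mf(x))^{q-p}=\alpha^{q-p}+(q-p)\int_{\alpha}^{Mf(x)}\lambda^{q-p-1}\,d\lambda,
\]
combined with $f^q=f^p\cdot f^{q-p}$, gives for $\mu$-a.e.\ $x\in E_\alpha$
\[
f(x)^q\le\alpha^{q-p}f(x)^p+(q-p)\,f(x)^p\int_{\alpha}^{Mf(x)}\lambda^{q-p-1}\,d\lambda.
\]

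Integrating over $E_\alpha$, the first term is at most $\alpha^{q-p}\int_{E_\alpha}f^p\,d\mu\le c\alpha^{q-p}\cdot\alpha^p\,\mu(F_\alpha)=c\alpha^q\mu(F_\alpha)$ by the endpoint case above, which is the first term in \eqref{technical-estimate}. For the second term, Tonelli's theorem (all integrands non-negative) rewrites it as
\[
(q-p)\int_{\alpha}^{\infty}\lambda^{q-p-1}\Big(\int_{E_\lambda}f^p\,d\mu\Big)\,d\lambda,
\]
using that $\{x\in E_\alpha:Mf(x)>\lambda\}=E_\lambda$ for $\lambda\ge\alpha$. Applying Lemma~\ref{weaktype-lemma} inside the bracket bounds this by $c(q-p)\int_{\alpha}^{\infty}\lambda^{q-1}\mu(F_\lambda)\,d\lambda\le c(q-p)\int_{0}^{\infty}\lambda^{q-1}\mu(F_\lambda)\,d\lambda$, and the layer cake formula $\int_{100B_0}(Mf)^q\,d\mu=q\int_0^\infty\lambda^{q-1}\mu(F_\lambda)\,d\lambda$ turns this into $\frac{c(q-p)}{q}\int_{100B_0}(Mf)^q\,d\mu$. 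Adding the two contributions gives exactly \eqref{technical-estimate}.

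The computation is essentially routine; the only points deserving care are the passage to the endpoint $\lambda=\alpha$ in Lemma~\ref{weaktype-lemma} via monotone convergence, and checking that the balls realizing $Mf$ at points of $B_0$ lie inside $100B_0$, so that $f\le Mf$ a.e.\ on $B_0$. Finiteness of $\int_{100B_0}(Mf)^q\,d\mu$ is not logically needed (the inequality being trivial otherwise), but in any case follows from $f\in L^q_{loc}(X)$ together with the strong $(q,q)$ bound for the maximal operator.
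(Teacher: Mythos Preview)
Your argument is correct and follows essentially the same route as the paper: split $f^q=f^{q-p}\cdot f^p$, bound $f^{q-p}\le (Mf)^{q-p}$ via Lebesgue differentiation, expand $(Mf)^{q-p}$ by the layer--cake identity, apply Lemma~\ref{weaktype-lemma} at each level, and finish with Tonelli and the layer--cake formula for $(Mf)^q$. Your handling of the endpoint $\lambda=\alpha$ by monotone convergence is in fact a touch more careful than the paper, which applies Lemma~\ref{weaktype-lemma} directly at $\lambda=\alpha$ without comment.
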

\begin{proof}
Fix a ball $B_0$ in $X$. Let $\alpha=\essinf_{B_{0}}Mf$, so that $Mf\geq\alpha$ $\mu$--a.e. on $B_0$. Set $d\nu=f^pd\mu$. Now
\begin{equation*}
 \int_{B_{0}\cap \{Mf>\alpha\}}f^qd\mu = \int_{B_{0}\cap \{Mf>\alpha\}}f^{q-p}f^pd\mu
\leq \int_{\{ Mf>\alpha\}}(Mf)^{q-p}d\nu. 
\end{equation*}
However, for every positive measure and measurable non--negative function $g$ and a measurable set $E$, we have
$$
\int_{E}g^p d\nu=p\int_0^{\infty}\lambda^{p-1}\nu\left(\{x\in E\,:\, g(x)>\lambda\}\right)d\lambda
$$
for all $0<p<\infty$. This implies
\begin{align*}
 \int_{B_{0}\cap \{Mf>\alpha\}}f^qd\mu &\leq  (q-p)\int_{0}^{\infty}\lambda^{q-p-1}\nu(B_{0} \cap \{ Mf>\alpha \}\cap \{ Mf>\lambda \})d\lambda \\
&=(q-p)\int_{0}^{\alpha}\lambda^{q-p-1}\nu(B_{0}\cap \{Mf>\alpha\})d\lambda \\
&  \quad+ (q-p)\int_{\alpha}^{\infty}\lambda^{q-p-1}\nu(B_{0}\cap\{Mf>\lambda\})d\lambda.
\end{align*}
Replacing $d\nu=f^pd\mu$ and integrating over $\lambda$, we get
\begin{multline*}
 \int_{B_{0}\cap \{Mf>\alpha\}}f^qd\mu \leq \int_{B_{0}\cap \{Mf>\alpha \}} \alpha^{q-p}f^pd\mu  \\
+(q-p)\int_{\alpha}^{\infty}\lambda^{q-p-1}\int_{B_{0}\cap \{Mf>\lambda \}}f^pd\mu d\lambda. 
\end{multline*}
We can now use Lemma \ref{weaktype-lemma} for both integrals on the right--hand side and obtain
\begin{multline*}
 \int_{B_{0}\cap \{Mf>\alpha\}}f^qd\mu 
\leq c\alpha^q\mu(100B_0\cap \{Mf>\alpha\})\\
 + c(q-p)\int_{\alpha}^{\infty}\lambda^{q-1}\mu(100B_0\cap \{Mf>\lambda\})d\lambda. 
\end{multline*}
Then by changing the order of integration we arrive at
\begin{equation*}
 \begin{split}
\int_{B_{0}\cap \{Mf>\alpha\}}f^qd\mu
&\leq c\alpha^q\mu(100B_0\cap \{Mf>\alpha\})  \\
&\quad +c(q-p)\int_{\alpha}^{\infty}\lambda^{q-1}\int_{100B_0\cap \{Mf>\lambda\}}d\mu d\lambda \\
&= c\alpha^q\mu(100B_0\cap \{Mf>\alpha\})  \\
& \quad+ c(q-p)\int_{100B_0}\int_{\alpha}^{Mf}\lambda^{q-1}d\lambda d\mu,
\end{split}
\end{equation*}
from which by integrating over $\alpha$ we conclude that
\begin{equation*}
\begin{split}
\int_{B_{0}\cap \{Mf>\alpha\}}f^qd\mu &\leq  c\alpha^q\mu(100B_0\cap \{Mf>\alpha\}) \\
& \quad +c\frac{q-p}{q}\int_{100B_0}\big((Mf)^q -\alpha^q\big) d\mu \\
&\leq c\alpha^q\mu(100B_0\cap \{Mf>\alpha\})\\
&\quad + c\frac{q-p}{q}\int_{100B_{0}}(Mf)^qd\mu.
\end{split}
\end{equation*}
\end{proof}


\bigskip

\begin{proof}[Proof of the Gehring lemma]
Consider a fixed ball $B_0$. Set $\alpha=\essinf_{B_0}Mf$ and let $q>p$ be an arbitrary real number for the moment. We divide the integral of $f^q$ over $B_0$ into two parts:
\begin{equation}
\label{e:g-toaprox}
\int_{B_{0}}f^q d\mu= \int_{B_{0}\cap \{Mf>\alpha\}}f^qd\mu + \int_{B_0\cap \{Mf\leq \alpha\}}f^q d\mu.
\end{equation}
The second integral in \eqref{e:g-toaprox} is easier to estimate, and we have
\begin{equation*}
 \int_{B_{0}\cap \{Mf\leq\alpha\}}f^qd\mu \leq    \int_{B_{0}\cap \{Mf\leq\alpha\}}(Mf)^qd\mu 
\leq \alpha^q\mu(100B_0\cap \{Mf\leq \alpha\}).  
\end{equation*}
It would be tempting to use Lemma \ref{technical-lemma} to the first integral in \eqref{e:g-toaprox}, but this would require $f \in L^q_{loc}(X)$. Unfortunately, that is exactly what we need to prove. The function $f$ is assumed to be locally integrable and by the reverse H\"older inequality it is also in the local $L^p$--space. Nevertheless, we can replace $f$ with the truncated function $f_i=\min\{f,i\}$. The reverse H\"older inequality \eqref{rhi}, Lemmas \ref{weaktype-lemma} and \ref{technical-lemma} as well as the preceeding analysis hold for $f_i$. In addition, $f_i\in L^q_{loc}(X)$. We continue to denote the function $f$ but remember that from now on we mean the truncated function.  

With \eqref{technical-estimate} we get now from \eqref{e:g-toaprox}
\begin{equation*}
\begin{split}
\int_{B_0}f^q d\mu &\leq c\alpha^q \mu(100B_0)\cap\{Mf>\alpha\})+ c\frac{q-p}{q}\int_{100B_{0}}(Mf)^qd\mu \\ 
& \quad+\alpha^q \mu(100B_0)\cap\{Mf\leq\alpha\}) \\
&\leq c\alpha^q \mu(100B_0) + c\frac{q-p}{q}\int_{100B_{0}}(Mf)^qd\mu
\end{split}
\end{equation*}
and furthermore
$$
\vint_{B_{0}}f^q d\mu\leq c\alpha^q + c\frac{q-p}{q}\vint_{100B_{0}}(Mf)^qd\mu.
$$
This is true for all $q>p$. Let $\eps>0$  and choose $q>p$ such that $c(q-p)/p<\eps$. Then 
\begin{equation}
\label{e:g-apu}
\vint_{B_{0}}f^q d\mu\leq c\alpha^q + \eps\vint_{100B_{0}}(Mf)^qd\mu.
\end{equation}
Now that $f=f_i$ is locally $q$--integrable, the equation \eqref{e:g-apu} gives
\begin{equation}
\label{e:g-iterati}
\vint_{B_{0}}f^q d\mu\leq c\alpha^q + c\eps\vint_{100B_{0}}f^qd\mu
\end{equation}
due to the well known boundedness theorem for maximal functions, see for example \cite{coifweis}.
We have chosen $\alpha$ such that  $\alpha \leq Mf(x)$ for $\mu$--a.e. $x$ in $B_0$. Hence
\begin{equation*}
\begin{split}
\alpha^p &= \vint_{B_0}\alpha^pd\mu \leq  \vint_{B_{0}}(Mf)^pd\mu \leq  c\vint_{100B_{0}}(Mf)^pd\mu \\
&\leq c\vint_{100B_{0}}f^pd\mu \leq c\left(\vint_{100B_{0}}fd\mu\right)^p,
\end{split}
\end{equation*}
where we use again the estimate for the Hardy--Littlewood maximal function and the reverse H\"older inequality.  Moreover
\begin{equation}
\label{e:g-alpha}
\alpha^q \leq c\left(\vint_{100B_{0}}fd\mu\right)^q. 
\end{equation}
From \eqref{e:g-iterati} and \eqref{e:g-alpha} we conclude that 
\begin{equation}
\vint_{B_{0}}f^q d\mu\leq c\eps\vint_{100B_{0}}f^qd\mu+ c\left(\vint_{100B_{0}}fd\mu\right)^q
\end{equation}
for all balls $B_0$ of $X$. If necessary, choose a smaller $\eps$ and thus also a $q$ closer to $p$ in \eqref{e:g-apu} to make Lemma \ref{l:g-2iterat} hold true. Set $k=100$ in the lemma to obtain
$$
\vint_{B_{0}}f^qd\mu  \leq c \left(\vint_{2B_{0}}fd\mu\right)^q.
$$
It remains to pass to the limit with $i\to \infty$ and the theorem follows. 
\end{proof}

The following proposition implies Corollary \ref{gehring-corollary}.

\begin{pro}
Let $(X,d,\mu)$ satisfy the annular decay property and $\mu$ be doubling. Suppose that $f\in L^1_{loc}(X)$ is a non--negative function satisfying \eqref{rhi}. Then the measure induced by $f$ is doubling, i.e.
$$
\int_{2B}fd\mu\leq c\int_Bfd\mu
$$
for all balls $B$ of $X$. The constant $c$ depends only on the constant in \eqref{rhi}. 
\end{pro}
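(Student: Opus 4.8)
The plan is to exploit the reverse Hölder inequality on the larger ball together with the annular decay property to compare the $f$-mass of $2B$ with that of $B$. The starting point is the elementary inequality: for any ball $B = B(x,r)$,
\[
\int_{2B} f\,d\mu = \int_{B} f\,d\mu + \int_{2B\setminus B} f\,d\mu,
\]
so it suffices to bound $\int_{2B\setminus B} f\,d\mu$ by a constant times $\int_B f\,d\mu$. By Hölder's inequality applied on $2B$,
\[
\int_{2B\setminus B} f\,d\mu \le \left(\int_{2B} f^p\,d\mu\right)^{1/p}\mu(2B\setminus B)^{1-1/p},
\]
and then \eqref{rhi} on the ball $2B$ gives $\left(\int_{2B}f^p\,d\mu\right)^{1/p}\le c\,\mu(2B)^{1/p-1}\int_{2B}f\,d\mu$. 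Combining these,
\[
\int_{2B\setminus B} f\,d\mu \le c\left(\frac{\mu(2B\setminus B)}{\mu(2B)}\right)^{1-1/p}\int_{2B}f\,d\mu.
\]

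Now I would invoke the annular decay property: writing $2B\setminus B = B(x,2r)\setminus B(x,(1-\tfrac12)\cdot 2r)$, with $\delta = 1/2$ and radius $2r$, it yields $\mu(2B\setminus B)\le c\,2^{-\alpha}\mu(2B)$, so the factor $\big(\mu(2B\setminus B)/\mu(2B)\big)^{1-1/p}$ is bounded by a constant $\eta = \eta(\alpha,p,\C)<\infty$. This alone, however, does not immediately close the argument, because the resulting bound $\int_{2B\setminus B}f\,d\mu\le c\,\eta\int_{2B}f\,d\mu$ still has the full $2B$ on the right, not $B$. The standard remedy is to iterate on a chain of annuli at smaller and smaller scales: for $\delta$ small the constant $c\delta^\alpha$ in the annular decay estimate can be made as small as we like, so choosing $\delta$ with $c\,(c\delta^\alpha)^{1-1/p} =: \theta < 1$ and decomposing $2B\setminus B$ into finitely many annular shells of the form $B(x,(1-k\delta)^{-1}\rho)\setminus\cdots$ we can absorb the annular contributions.

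A cleaner way to organize this absorption is via the iteration Lemma~\ref{l:g-iterati}: set $Z(\rho) = \int_{B(x,\rho)} f\,d\mu$ for $\rho\in[r,2r]$, which is bounded and nondecreasing. For $r\le \rho < t \le 2r$ one has $Z(t) - Z(\rho) = \int_{B(x,t)\setminus B(x,\rho)}f\,d\mu$, and running the Hölder-plus-reverse-Hölder estimate above on the ball $B(x,t)$ together with the annular decay estimate applied to $B(x,t)\setminus B(x,\rho)$ (with $\delta = 1 - \rho/t \le 1$) gives
\[
Z(t) \le \Big(c(1-\rho/t)^{\alpha(1-1/p)} \Big) Z(t) + Z(\rho) + (\text{lower order}),
\]
but since we actually want a bound going the other direction, it is more natural to observe that the annular decay estimate makes $Z(2r)\le Z(r) + c\,\delta^{\alpha(1-1/p)}Z(2r)$ on a single shell only when $\delta$ is comparable to $1$, so one subdivides $[r,2r]$ into $N$ shells with $N$ depending only on $\alpha,p,\C$, telescopes, and sums the geometric series with ratio $\theta<1$. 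The main obstacle is precisely this absorption step: one has to be careful that the constant produced by iterating over $N\sim N(\alpha,p,\C)$ shells stays bounded and depends only on the allowed quantities, and that at each shell the reverse Hölder inequality is being applied to an honest ball of $X$ (which it is, since these are all concentric balls). Once the geometric sum is controlled, one gets $\int_{2B}f\,d\mu \le c\int_B f\,d\mu$ with $c = c(\alpha,p,\C)$, which in particular depends only on the data claimed. Finally, to deduce Corollary~\ref{gehring-corollary} one simply feeds this doubling property of $f\,d\mu$ into \eqref{improved-rhi}: replacing $\vint_{2B}f\,d\mu$ on the right-hand side of \eqref{improved-rhi} by $c\vint_B f\,d\mu$ (using $\mu(2B)\le \C\mu(B)$ to convert averages) yields \eqref{better-improved}.
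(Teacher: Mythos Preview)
Your approach is essentially the paper's: combine H\"older with the reverse H\"older inequality on a ball $B$ to obtain the $A_\infty$-type estimate $\nu(E)/\nu(B)\le c\big(\mu(E)/\mu(B)\big)^{1/p'}$ for $E\subset B$, apply this with $E=(1-\delta)B$ and use annular decay to make the right-hand side less than $1/2$ for small $\delta$, absorb to get $\nu(B)\le 2\nu((1-\delta)B)$, and then iterate finitely many times until $(1-\delta)^k<1/2$. The paper does exactly this single absorption plus finite iteration; your detour through Lemma~\ref{l:g-iterati} is unnecessary (as you suspected), and your shell-decomposition description is just a rephrasing of the same iteration.
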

\begin{proof}
Define 
\[
\nu(U)=\int_Ufd\mu
\]
for $U\subset X$ $\mu$--measurable. Fix a ball $B$ in $X$ and let $E\subset B$ be a $\mu$--measurable set. Then
\begin{equation*}
\begin{split}
\int_B &f\chi_Ed\mu 
\leq \left(\int_Bf^p d\mu\right)^{1/p}\mu(E)^{1-1/p} \\
&\leq  c\left(\int_Bf d\mu\right)\mu(B)^{1/p-1}\mu(E)^{1-1/p} 
= c\nu(B)\left(\frac{\mu(E)}{\mu(B)} \right)^{1-1/p}.
\end{split}
\end{equation*}
The inequalities above follow from the H\"older and the reverse H\"older inequalities, respectively. For all $E\subset B$ this implies
\begin{equation}
\label{version1}
\frac{\nu(E)}{\nu(B)} \leq c \left(\frac{\mu(E)}{\mu(B)}\right)^{1/p'},
\end{equation}
where $p'$ is the $L^p$--conjugate exponent of $p$. Since the set $E$ in \eqref{version1} is arbitrary, we can replace it by $B\setminus E$. Therefore
\begin{equation}
\label{version2}
1-\frac{\nu(E)}{\nu(B)} =\frac{\nu(B\setminus E)}{\nu(B)} \leq c \left(\frac{\mu(B\setminus E)}{\mu(B)}\right)^{1/p'}
\end{equation}
for all $E\subset B$. If $E=(1-\delta) B$, then by choosing $0<\delta <1$ small enough we get
\begin{equation}
\label{small-alpha}
c \left(\frac{\mu(B\setminus (1-\delta) B)}{\mu(B)}\right)^{1/p'}<\frac{1}{2}
\end{equation}
by the annular decay property. It follows from \eqref{version2} and \eqref{small-alpha} that
$$
1-\frac{\nu((1-\delta) B)}{\nu(B)} < \frac{1}{2} 
$$
and hence $\nu(B)\leq 2\nu((1-\delta) B)$. We are now able to iterate this. There exists $k\in \N$ such that $(1-\delta)^k< 1/2$ and thus
\begin{equation*}
\nu(B) \leq  2v((1-\delta) B) \leq  2^k\mu((1-\delta)^kB)\leq  2^k \nu(\frac{1}{2}B)
\end{equation*}
for all balls $B$ of $X$. This proves that $\nu$ is doubling. Remark that the doubling property of $\mu$ plays no role here.
\end{proof}

For examples of spaces that satisfy the annular decay property the interested reader may see for example \cite{bu}. We mention here spaces supporting a length metric, that form a rather large subclass of such spaces. In other words, suppose that a metric space supports a doubling measure, and for all $x$ and $y$ in $X$ it holds
\[
d(x,y)=\inf\mathrm{length}(\gamma),
\]
where the infimum is taken over all rectifiable paths joining $x$ and $y$. Then the space satisfies the $\alpha$--annular decay property for constants $\alpha$ and $c$ that depend only on the doubling constant, and $0<\delta \leq 1/2$. See \cite{col-mi} for the proof.


\section{Self--improving property of \\ Muckenhoupt weights }
\label{mucken-section}

Throughout the section, let $(X,d,\mu)$ let be a metric space with a doubling measure $\mu$.

Muckenhoupt weights form a class of functions that satisfy one type of a reverse H\"older inequality. More precisely, if $1<p<\infty$, a locally integrable non--negative function $w$ is in $A_p$ if for all balls $B$ in $X$ the inequality 
$$
\left(\vint_{B} w d\mu\right)\left(\vint_{B} w^{1-p'} d\mu\right)^{p-1}\leq c_{w}
$$
holds. The constant $c_{w}$ is called the $A_p$--constant of $w$ and $1/p+1/p'=1$. Moreover, $A_1$ is the class of locally integrable non--negative functions that satisfy
$$
\vint_{B}w d\mu \leq c_{w} \essinf_{x\in B}w(x).
$$
for all balls $B$ in $X$. In this section we show that the $A_p$--condition is an open ended condition; every $w\in A_p$ is also in some $A_{p-\eps}$.

\medskip
In the following lemma number $2$ is not important and it can be replaced by any positive constant.

\begin{pro} 
\label{negative-holder}
For all locally integrable non--negative functions the inequality
\begin{equation}
\label{help5}
\left(\vint_{B} f^{-t} d\mu\right)^{-1/t}\leq  \left(\vint_{B} f^{1/2} d\mu\right)^{2}
\end{equation}
holds for all $t>0$ and all balls $B$ in $X$. 
\end{pro}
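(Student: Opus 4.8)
The plan is to read \eqref{help5} as a special case of the monotonicity of power means with respect to a probability measure: writing $M_s(f)=\big(\vint_{B}f^{s}\,d\mu\big)^{1/s}$ for the mean of $f$ of order $s$, the left side of \eqref{help5} is $M_{-t}(f)$ and the right side is $M_{1/2}(f)$, and since $-t<1/2$ the claim is just $M_{-t}(f)\le M_{1/2}(f)$. I would derive this from a single application of Jensen's inequality on $B$ equipped with the normalised measure $\mu(B)^{-1}\mu$, which is a probability measure because $0<\mu(B)<\infty$.

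Before applying Jensen I would dispose of the trivial cases. If $f=0$ on a subset of $B$ of positive $\mu$-measure, then $f^{-t}=+\infty$ there, hence $\vint_{B}f^{-t}\,d\mu=+\infty$ and the left-hand side of \eqref{help5} equals $0$, so the inequality holds; thus I may assume $f>0$ $\mu$-a.e.\ on $B$. In that case $\vint_{B}f^{1/2}\,d\mu>0$ and, by H\"older (or Jensen again), $\vint_{B}f^{1/2}\,d\mu\le\big(\vint_{B}f\,d\mu\big)^{1/2}<\infty$, so all the averages appearing below live in $(0,\infty]$ and one may raise them to negative powers without trouble.

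Now set $g=f^{1/2}$, so that $\vint_{B}g\,d\mu=\vint_{B}f^{1/2}\,d\mu$ and $f^{-t}=g^{-2t}$. The function $\varphi(s)=s^{-2t}$ satisfies $\varphi''(s)=2t(2t+1)s^{-2t-2}>0$ on $(0,\infty)$, hence is convex there, and extended by $\varphi(0)=+\infty$ it is convex and lower semicontinuous on $[0,\infty)$ — which is what legitimises Jensen even when $\vint_{B}f^{-t}\,d\mu=\infty$. Jensen's inequality then gives
\[
\vint_{B}f^{-t}\,d\mu=\vint_{B}\varphi(g)\,d\mu\ \geq\ \varphi\!\Big(\vint_{B}g\,d\mu\Big)=\Big(\vint_{B}f^{1/2}\,d\mu\Big)^{-2t},
\]
and raising both sides to the power $-1/t<0$ reverses the inequality and yields exactly \eqref{help5}.

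The argument is essentially routine; the only points to watch are the sign bookkeeping when taking the $(-1/t)$-th power and the (standard) validity of Jensen for the extended-real-valued convex function $\varphi$, both handled above. If one prefers to avoid Jensen, the same conclusion follows from H\"older's inequality with conjugate exponents $p=\frac{2t+1}{2t}$ and $p'=2t+1$ applied to the factorisation $1=f^{1/(2p)}\cdot f^{-1/(2p)}$: integrating over $B$ and dividing by $\mu(B)$ gives $1\le\big(\vint_{B}f^{1/2}\,d\mu\big)^{2t/(2t+1)}\big(\vint_{B}f^{-t}\,d\mu\big)^{1/(2t+1)}$, hence $\big(\vint_{B}f^{1/2}\,d\mu\big)^{-2t}\le\vint_{B}f^{-t}\,d\mu$, and \eqref{help5} again follows upon taking the $(-1/t)$-th power.
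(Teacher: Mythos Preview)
Your proof is correct and follows essentially the same route as the paper: substitute $g=f^{1/2}$ and apply Jensen's inequality for the convex function $x\mapsto x^{-2t}$ on $(0,\infty)$, then take the $(-1/t)$-th power. Your treatment is more careful about the degenerate case $f=0$ on a set of positive measure and adds a H\"older-inequality alternative, but the core argument is identical.
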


\begin{proof}
Setting $g=f^{1/2}$ and replacing $f$ by it in \eqref{help5} gives an equivalent inequality
$$
\vint_{B} g^{-2t} d\mu\geq \left(\vint_{B}g d\mu\right)^{-2t}.
$$
This holds by the Jensen inequality since $x\mapsto x^{-2t}$ is a convex function on $\{x>0\}$.
\end{proof}


\begin{thm}
\label{RHI-w}
Let $1\leq p <\infty$ and $w\in A_p$. Then there exist a constant $c$ and $\eps>0$ such that
\begin{equation}
\label{improved-gehring}
\left(\vint_{B}w^{1+\eps}d\mu\right)^{1/(1+\eps)}\leq c \vint_{B}wd\mu,
\end{equation} 
where the constant depends only on the $A_p$--constant of $w$ and on the constants in the Gehring lemma.
\end{thm}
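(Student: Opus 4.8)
The plan is to deduce \eqref{improved-gehring} from the Gehring lemma (Theorem \ref{gehring-theorem}), so the whole task reduces to verifying that $w$ satisfies a reverse Hölder inequality of the form \eqref{rhi} for some exponent $s>1$, with a constant controlled by the $A_p$-constant of $w$. Once such an $s$ is in hand, Theorem \ref{gehring-theorem} produces $q>s>1$ with $(\vint_B w^q\,d\mu)^{1/q}\le c_q\vint_{2B}w\,d\mu$; but since $w\in A_p$, the doubling of the measure $\nu=w\,d\mu$ (the Proposition preceding this section, which uses the annular decay property — or, if we want to avoid it, the weaker but still sufficient inequality $\vint_{2B}w\,d\mu\le c\,\vint_B w\,d\mu$ that follows directly from the $A_p$ condition via Hölder) lets us pass from $2B$ back to $B$ on the right-hand side. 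Setting $\eps=q-1$ then gives exactly \eqref{improved-gehring}.

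So the core step is: \emph{every $w\in A_p$ satisfies a reverse Hölder inequality}. For $p>1$ this is the classical argument. Fix a ball $B$ and write, for any measurable $E\subset B$,
\[
\frac{\mu(E)}{\mu(B)}=\vint_B \chi_E\,d\mu
=\vint_B \chi_E\, w^{1/p}w^{-1/p}\,d\mu
\le\left(\vint_B \chi_E\, w\,d\mu\right)^{1/p}\left(\vint_B w^{-p'/p}\,d\mu\right)^{1/p'},
\]
and since $-p'/p=1-p'$, the second factor is controlled by the $A_p$-constant: $\left(\vint_B w^{1-p'}\,d\mu\right)^{1/p'}\le c_w^{1/(p-1)}\left(\vint_B w\,d\mu\right)^{-1/p}\cdot(\text{a power})$; cleaning this up yields $\mu(E)/\mu(B)\le c\,(\nu(E)/\nu(B))^{1/p}$, i.e. the measure $\nu=w\,d\mu$ and $\mu$ satisfy a reverse comparison. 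From this one extracts, by the standard distribution-function computation (expressing $\vint_B w^{1+\eta}\,d\mu$ via $\int_0^\infty \lambda^\eta \mu(\{w>\lambda\}\cap B)\,d\lambda$ and using the comparison to gain a small power), that $\vint_B w^{s}\,d\mu\le c\,(\vint_B w\,d\mu)^{s}$ for some $s=1+\eta>1$; the price is only a constant depending on $c_w$, $p$ and the doubling constant. For $p=1$ the $A_1$ condition $\vint_B w\,d\mu\le c_w\,\essinf_B w$ immediately gives $\vint_B w^s\,d\mu\le (\essinf_B w)^{s-1}\vint_B w\,d\mu\le c_w^{s-1}(\vint_B w\,d\mu)^{s}$, so the reverse Hölder inequality holds trivially with any $s>1$.

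The main obstacle is the bookkeeping of constants and exponents: one must check that the exponent $s>1$ produced from the $A_p$ condition depends only on $p$ and $c_w$ (and the doubling constant), so that Theorem \ref{gehring-theorem} — whose output $q$ and $c_q$ depend only on $p$, the reverse-Hölder constant and the doubling constant — yields a final $\eps>0$ and $c$ depending only on the $A_p$-constant of $w$ and the Gehring constants, as claimed. A secondary technical point is the return from $2B$ to $B$: this is harmless because, as noted, the $A_p$ inequality directly forces $\vint_{2B}w\,d\mu\le c\,\vint_{B}w\,d\mu$ (apply the reverse comparison above with $E=B\subset 2B$), so no annular decay hypothesis is actually needed here — consistent with the remark in the introduction that the Muckenhoupt application holds without annular decay. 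Everything else is the routine Calderón–Zygmund / distribution-function manipulation already used in Lemma \ref{technical-lemma}.
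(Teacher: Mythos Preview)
Your plan diverges from the paper's and carries both a redundancy and an error.

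The paper's argument is short: from the $A_p$ inequality and Jensen (Proposition~\ref{negative-holder}) one obtains
\[
\vint_B w\,d\mu \;\le\; c\Bigl(\vint_B w^{1/2}\,d\mu\Bigr)^{2},
\]
which, after the substitution $g=w^{1/2}$, reads $(\vint_B g^{2}\,d\mu)^{1/2}\le c\vint_B g\,d\mu$ --- a reverse H\"older inequality for $g$ with exponent $2$. Theorem~\ref{gehring-theorem} then yields $\delta>0$ with $(\vint_B g^{2+\delta})^{1/(2+\delta)}\le c\vint_{2B}g\,d\mu$; H\"older and the doubling of $w\,d\mu$ finish. No $A_\infty$ comparison or distribution-function step is needed, and the stated dependence of the constants on ``the constants in the Gehring lemma'' is literal.

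Your route instead first proves the comparison $\mu(E)/\mu(B)\le c\,(\nu(E)/\nu(B))^{1/p}$ and then proposes to extract, ``by the standard distribution-function computation'', the inequality $\vint_B w^{s}\,d\mu\le c\,(\vint_B w\,d\mu)^{s}$ for some $s>1$. But that inequality \emph{is already} \eqref{improved-gehring} with $\eps=s-1$; invoking Gehring afterward buys nothing. In effect you are reproving the Coifman--Fefferman reverse H\"older theorem from scratch and then applying Gehring redundantly, rather than using Gehring as the engine. The step you call ``routine'' is exactly the nontrivial content, and it is not what Lemma~\ref{technical-lemma} supplies (that lemma already takes a reverse H\"older inequality as hypothesis).

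Separately, your direct argument for $p=1$ is wrong: the inequality $\vint_B w^{s}\,d\mu\le(\essinf_B w)^{s-1}\vint_B w\,d\mu$ points the wrong way, since $w^{s-1}\ge(\essinf_B w)^{s-1}$ on $B$. The paper handles $p=1$ by the inclusion $A_1\subset A_p$. Your observation that the $2B\to B$ step follows from $\mu(B)/\mu(2B)\le c\,(\nu(B)/\nu(2B))^{1/p}$ --- hence $\nu$ is doubling without any annular decay assumption --- is correct and matches the paper's closing remark.
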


\begin{proof}
Since $A_1\subset A_p$ for all $p>1$, we can assume $p>1$. Take an arbitrary ball $B$ in $X$ and $w\in A_p$ for some $p>1$. This implies
$$
\left(\vint_{B} w d\mu\right)\leq c \left(\vint_{B} w^{1-p'} d\mu\right)^{1-p},
$$
where the right--hand side is well defined since either $w>0$ $\mu$--a.e. or $w\equiv 0$. By Proposition \ref{negative-holder} this implies
\begin{equation}
\label{help7}
\left(\vint_{B} w d\mu\right)\leq c \left(\vint_{B} w^{1/2} d\mu\right)^{2}.
\end{equation}
Now from the Gehring lemma it follows that
$$
\left(\vint_{B} w^{1+\epsilon} d\mu\right)^{1+\epsilon}\leq c \left(\vint_{B} w^{1/2} d\mu\right)^{2},
$$
where we can use the H\"older inequality and get to
\begin{equation}
\label{gehring-w}
\left(\vint_{B} w^{1+\epsilon} d\mu\right)^{1+\epsilon}\leq c \vint_{2B}w d\mu
\end{equation}
for some $\eps>0$ and constant c. To see this, in \eqref{help7} replace  $w$ by an auxiliarity function $g$ such that $w=g^2$. Then we can rewrite \eqref{help7} as
\begin{equation*}
\left(\vint_{B} g^2 d\mu\right)^{1/2}\leq c \vint_{B} g d\mu,
\end{equation*}
i.e. the reverse H\"older inequality for $g$. Gehring's lemma provides us with $\delta>0$ such that
\begin{equation*}
\left(\vint_{B} g^{2+\delta} d\mu\right)^{1/(2+\delta)}\leq c \vint_{2B} g d\mu.
\end{equation*}
This leads to \eqref{gehring-w} with $\eps =\delta/2$. Finally, we recall that a Muckenhoupt weigth induces a doubling measure, and hence \eqref{improved-gehring} follows.
\end{proof}

\begin{cor}
Let $1<p<\infty$ and $w\in A_p$. There exists $p_1<p$ such that $w\in A_{p_{1}}$.
\end{cor}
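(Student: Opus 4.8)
The plan is to reduce the statement to the self--improvement already established in Theorem~\ref{RHI-w}, applied not to $w$ itself but to its dual weight. Since $A_1\subset A_p$ we are in the range $1<p<\infty$, and if $w\equiv 0$ the claim is trivial, so we may assume $w>0$ $\mu$--a.e.; then $\sigma:=w^{1-p'}$ is a well--defined non--negative locally integrable function. The first step is to record the elementary duality $w\in A_p\iff\sigma\in A_{p'}$: writing out the $A_{p'}$--condition for $\sigma$ and using $(p')'=p$, $p'-1=1/(p-1)$ and $\sigma^{1-p}=w$ (the last because $(1-p')(1-p)=1$), one sees after raising to the power $p-1$ that it coincides with the $A_p$--condition for $w$, the $A_p$-- and $A_{p'}$--constants being related by $c_\sigma=c_w^{p'-1}$.

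The second step is to apply Theorem~\ref{RHI-w} to $\sigma\in A_{p'}$: there are $\eps>0$ and a constant $c$, depending only on $c_\sigma$ (hence on $c_w$ and $p$) and on the constants in the Gehring lemma, such that
\[
\left(\vint_{B}\sigma^{1+\eps}\,d\mu\right)^{1/(1+\eps)}\leq c\,\vint_{B}\sigma\,d\mu
\]
for every ball $B$ of $X$.

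The third step is to choose the candidate exponent $p_1:=1+(p-1)/(1+\eps)$. Since $\eps>0$ this gives $1<p_1<p$, and a short computation yields $1-p_1'=(1+\eps)(1-p')$, hence $w^{1-p_1'}=\sigma^{1+\eps}$. Feeding the improved reverse H\"older inequality for $\sigma$ into the $A_{p_1}$--test quantity for $w$, the factor $\left(\vint_{B}w^{1-p_1'}\,d\mu\right)^{p_1-1}=\left(\vint_{B}\sigma^{1+\eps}\,d\mu\right)^{(p-1)/(1+\eps)}$ is bounded by $c^{\,p-1}\left(\vint_{B}\sigma\,d\mu\right)^{p-1}=c^{\,p-1}\left(\vint_{B}w^{1-p'}\,d\mu\right)^{p-1}$; multiplying by $\vint_{B}w\,d\mu$ bounds $\left(\vint_{B}w\,d\mu\right)\left(\vint_{B}w^{1-p_1'}\,d\mu\right)^{p_1-1}$ by $c^{\,p-1}c_w$. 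Therefore $w\in A_{p_1}$, with $A_{p_1}$--constant at most $c^{\,p-1}c_w$.

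There is no genuine obstacle here: the whole argument consists of the exponent bookkeeping $1-p_1'=(1+\eps)(1-p')$ together with the duality identity $w\in A_p\iff w^{1-p'}\in A_{p'}$, each a one--line calculation. The only points deserving a word of care are the well--definedness of $\sigma$ (handled by the dichotomy $w>0$ a.e.\ or $w\equiv 0$ already invoked for Theorem~\ref{RHI-w}) and checking that every constant produced depends only on the permitted quantities.
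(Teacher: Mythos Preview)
Your proof is correct and is essentially the same as the paper's: both apply Theorem~\ref{RHI-w} to the dual weight $\sigma=w^{1-p'}=w^{-p'/p}\in A_{p'}$, set $p_1=1+(p-1)/(1+\eps)$, and use the resulting bound $\left(\vint_B\sigma^{1+\eps}d\mu\right)^{1/(1+\eps)}\le c\vint_B\sigma\,d\mu$ together with the original $A_p$ inequality to verify the $A_{p_1}$ condition. The only differences are cosmetic---your notation $1-p'$ versus the paper's $-p'/p$, and the order in which the final two estimates are combined.
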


\begin{proof}
Recall that $w\in A_p$ if and only if $w^{-p'/p}\in A_{p'}$. It follows from Theorem \ref{RHI-w} that there are $\eps>0$ and a constant $c$ such that
\begin{equation}
\label{help8}
\left(\vint_{B}(w^{-p'/p})^{1+\eps}d\mu\right)^{1/(1+\eps)} \leq c \vint_{B}w^{-p'/p}d\mu.
\end{equation}
In addition,
$$
\frac{p'}{p}(1+\eps) =\frac{1+\eps}{p-1} = \frac{1}{p_1-1} = \frac{p'_1}{p_1},
$$
where $p_1=p/(1+\eps)-1/(1+\eps)+1$. Since $p>1$, $p_1<p$. The equation \eqref{help8} can now be written as
\begin{equation}
\label{help9}
\vint_{B}w^{-p'_{1}/p_{1}}d\mu \leq c \left(\vint_{B}w^{-p'/p}d\mu\right)^{1+\eps}.
\end{equation}
On the other hand, $-p'/p=1-p'$ and thus the $A_p$ condition of $w$ implies
\begin{equation*}
\left(\vint_{B}w^{-p'/p}d\mu\right)^{p/p'}\leq c\left(\vint_{B}wd\mu\right)^{-1}.
\end{equation*}
Raising this first to the power $p'/p$ and then to $1+\eps$,  we get
\begin{equation}
\begin{split}
\label{version4}
\left(\vint_{B}w^{-p'/p}d\mu\right)^{1+\eps}&\leq c\left(\vint_{B}wd\mu\right)^{-p'(1+\eps)/p} \\
&=    c\left(\vint_{B}wd\mu\right)^{-p'_1/p_1}.
\end{split}
\end{equation}
From \eqref{help9} and \eqref{version4} we finally conclude that
$$
\vint_{B}w^{-p'_1/p_1}d\mu \leq c\left(\vint_{B}wd\mu \right)^{-p'_1/p_1}.
$$ 
This means that $w\in A_{p_1}$, where $p_1<p$.
\end{proof}


\bibliographystyle{plain}

\makeatletter
\addcontentsline{toc}{chapter}{\bibname}
\makeatother
\bibliography{viitteet.bib}
\nocite{*}

\end{document}